\documentclass[12pt]{amsart}

\usepackage{amssymb}

\usepackage{enumerate}

\usepackage[T1]{fontenc}

\makeatletter
\@namedef{subjclassname@2010}{%
  \textup{2010} Mathematics Subject Classification}
\makeatother


\newtheorem{thm}[subsection]{Theorem}
\newtheorem{prop}[subsection]{Proposition}
\newtheorem{lem}[subsection]{Lemma}


\theoremstyle{definition}
\newtheorem{defin}[subsection]{Definition}
\newtheorem{rem}[subsection]{Remark}
\newtheorem{exam}[subsection]{Example}

\newtheorem*{prob}{Problem}


\frenchspacing

\textwidth=13.5cm
\textheight=23cm
\parindent=16pt
\oddsidemargin=-0.5cm
\evensidemargin=-0.5cm
\topmargin=-0.5cm


\newcommand{\imax}{\underline{m}}

\DeclareMathOperator{\dist}{dist}
\DeclareMathOperator{\supp}{supp}


\begin{document}


\baselineskip=17pt


\title[{\L}ojasiewicz ideals]{{\L}ojasiewicz ideals\\ in Denjoy-Carleman classes}

\author[V. Thilliez]{Vincent Thilliez}
\address{Laboratoire Paul Painlev\'e, Math\'ematiques - B\^atiment M2\\
Universit\'e Lille 1\\
F-59655 Villeneuve d'Ascq Cedex, France}
\email{thilliez@math.univ-lille1.fr}

\date{}

\begin{abstract} The classical notion of {\L}ojasiewicz ideals of smooth functions is studied in the context of non-quasianalytic Denjoy-Carleman classes. In the case of principal ideals, we obtain a characterization of {\L}ojasiewicz ideals in terms of properties of a generator. This characterization involves a certain type of estimates that differ from the usual {\L}ojasiewicz inequality. We then show that basic properties of {\L}ojasiewicz ideals in the $\mathcal{C}^\infty$ case have a Denjoy-Carleman counterpart. 
\end{abstract}

\subjclass[2010]{26E10, 46E10}

\keywords{{\L}ojasiewicz ideals, Ultradifferentiable functions}

\maketitle

\section{Introduction}
Let $\Omega$ be an open subset of $\mathbb{R}^n$, and let $\mathcal{C}^\infty(\Omega)$ be the Fr\'echet algebra of smooth functions in $\Omega$. Let $X$ be a closed subset of $\Omega$. An element $\varphi$ of $\mathcal{C}^\infty(\Omega)$ is said to satisfy the \emph{{\L}ojasiewicz inequality with respect to $X$} if, for every compact subset $K$ of $\Omega$, there are real constants $C>0$ and $\nu\geq 1$ such that, for any $x\in K$, we have 
\begin{equation}\label{ineq1}
\vert \varphi(x)\vert\geq C \dist(x,X)^\nu.
\end{equation}
For example, it is well-known that any real-analytic function satisfies the {\L}ojasiewicz inequality with respect to its zero set. 

An element of $\mathcal{C}^\infty(\Omega)$ is said to be \emph{flat on $X$} if it vanishes, together with all its derivatives, at each point of $X$. Denote by $\imax^\infty_X$ the ideal of functions of $\mathcal{C}^\infty(\Omega)$ that are flat on $X$. The following statement appears in {\cite[Section V.4]{Tou}} and establishes a connection between the {\L}ojasiewicz inequality and the behavior of ideals with respect to flat functions. 
\begin{thm}\label{Touger}
Let $\mathcal{I}$ be a finitely generated proper ideal in $\mathcal{C}^\infty(\Omega)$, and let $X$ be the zero set of $\mathcal{I}$. The following properties are equivalent:
\begin{enumerate}[$(A)$]
\item The ideal $\mathcal{I}$ contains an element $\varphi$ which satisfies the {\L}ojasiewicz inequality with respect to $X$.
\item $\imax^\infty_X\subset \mathcal{I} $.
\item $ \imax^\infty_X= \mathcal{I}\imax^\infty_X $.
\end{enumerate}
\end{thm}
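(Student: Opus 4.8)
The plan is to prove the cycle of implications $(A)\Rightarrow(B)\Rightarrow(C)\Rightarrow(A)$, since the reverse-direction implications $(B)\Rightarrow(C)$ needs work but $(C)\Rightarrow(B)$ is trivial once one observes that a proper ideal contains no unit. Let me organize the three steps.

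For $(A)\Rightarrow(B)$, suppose $\varphi\in\mathcal{I}$ satisfies the {\L}ojasiewicz inequality with respect to $X$, and let $\psi\in\imax^\infty_X$ be flat on $X$. The goal is to write $\psi=\varphi\chi$ with $\chi\in\mathcal{C}^\infty(\Omega)$; then $\psi=\varphi\chi\in\mathcal{I}$. The natural candidate is $\chi=\psi/\varphi$ on $\Omega\setminus X$, extended by $0$ on $X$. Away from $X$ this is smooth. The point is that the {\L}ojasiewicz inequality gives, on a compact neighborhood, $\vert\varphi(x)\vert\geq C\,\dist(x,X)^\nu$, while flatness of $\psi$ on $X$ gives, by the Taylor estimate / Whitney-type bound, $\vert\partial^\alpha\psi(x)\vert\leq C_{\alpha,N}\,\dist(x,X)^N$ for every $N$. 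Differentiating the quotient by the Leibniz rule and Faà di Bruno (for the powers of $1/\varphi$), each derivative of $1/\varphi$ picks up a power of $1/\varphi$ that is controlled by a negative power of $\dist(x,X)$; choosing $N$ large enough for each multi-index $\alpha$ shows that $\partial^\alpha(\psi/\varphi)\to 0$ as $x\to X$, so $\chi$ extends to a $\mathcal{C}^\infty$ function on $\Omega$ (indeed flat on $X$). This is the main technical step, and the delicate bookkeeping is the combinatorial estimate on $\partial^\alpha(1/\varphi)$ in terms of the derivatives of $\varphi$ and powers of $\dist(x,X)^{-1}$; one should also use that $\vert\partial^\beta\varphi\vert$ is bounded on compacts, together with the classical fact that a {\L}ojasiewicz inequality for $\varphi$ propagates to its derivatives in a weaker form, or simply absorb those into the large choice of $N$.

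For $(B)\Rightarrow(C)$, the inclusion $\mathcal{I}\imax^\infty_X\subset\imax^\infty_X$ is obvious since $\imax^\infty_X$ is an ideal. For the reverse inclusion $\imax^\infty_X\subset\mathcal{I}\imax^\infty_X$, take $\psi$ flat on $X$ and factor it as a product of two flat functions, $\psi=\psi_1\psi_2$ with $\psi_1,\psi_2\in\imax^\infty_X$: this is standard — write $\psi_1=\sigma(\psi)\vert\psi\vert^{1/2}$ where $\sigma$ is the sign (or, to stay smooth, use a construction based on $\dist(\cdot,X)$, e.g. let $\theta$ be a smooth function of $\dist(\cdot,X)^2$ that is flat at $0$ and bounded below away from $X$ appropriately, and set $\psi_1=\theta$, $\psi_2=\psi/\theta$, checking flatness of the quotient exactly as in step $(A)\Rightarrow(B)$). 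Then by $(B)$ we have $\psi_2\in\imax^\infty_X\subset\mathcal{I}$, hence $\psi=\psi_1\psi_2\in\imax^\infty_X\cdot\mathcal{I}=\mathcal{I}\imax^\infty_X$.

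For $(C)\Rightarrow(A)$, this is where one invokes finite generation of $\mathcal{I}$ and the hypothesis that $X$ is the zero set of $\mathcal{I}$. Write $\mathcal{I}=(f_1,\dots,f_p)$ and set $\varphi=f_1^2+\cdots+f_p^2\in\mathcal{I}$; its zero set is exactly $X$. If $\varphi$ already satisfies the {\L}ojasiewicz inequality we are done, so the content is to show that $(C)$ forces this. The idea: if $\varphi$ did \emph{not} satisfy the {\L}ojasiewicz inequality on some compact $K$, one can construct (by a standard sequence-of-bumps argument near a point of $X$ where the inequality fails at every polynomial rate) a function $\psi$ flat on $X$ that cannot be written as a finite combination $\sum g_i f_i$ with smooth $g_i$ — intuitively, $\psi$ tends to $X$ "faster than any power" is needed, yet any element of $\mathcal{I}$ is bounded in modulus by a constant times $\varphi^{1/2}\sim(\text{that combination})$, so membership in $\mathcal{I}$, or even in $\mathcal{I}\imax^\infty_X$, would force a lower bound on the decay of the $g_i$ incompatible with the construction. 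Equivalently, and more cleanly, one uses the equivalence $(C)\Leftrightarrow(B)$ already proved together with the classical result (Tougeron, \cite{Tou}) that $\imax^\infty_X$ is contained in a finitely generated ideal $\mathcal{I}$ with zero set $X$ if and only if $\mathcal{I}$ contains an element with a {\L}ojasiewicz inequality; I would reproduce the "bad point" construction to make the note self-contained.

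The main obstacle is the implication $(A)\Rightarrow(B)$: the quotient $\psi/\varphi$ must be shown to be $\mathcal{C}^\infty$ up to $X$, which requires a careful Faà-di-Bruno-type estimate controlling every derivative of $1/\varphi$ by a fixed negative power of $\dist(\cdot,X)$ against which the super-polynomial flatness of $\psi$ wins. I expect $(B)\Leftrightarrow(C)$ to be essentially formal (the only real input being the splitting of a flat function into a product of two flat functions), and $(C)\Rightarrow(A)$ to reduce to a clean bump-function counterexample argument exploiting finite generation.
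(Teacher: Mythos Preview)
The paper does not prove this theorem: it is quoted from Tougeron's book \cite{Tou} as classical background, with no argument given. So there is no ``paper's own proof'' to compare against; what follows is an assessment of your proposal on its own merits.

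Your step $(A)\Rightarrow(B)$ is correct and standard: the {\L}ojasiewicz inequality gives a polynomial lower bound on $\vert\varphi\vert$ near $X$, flatness of $\psi$ gives super-polynomial decay of every derivative, and the Leibniz/Fa\`a di Bruno bookkeeping shows $\psi/\varphi$ extends smoothly across $X$.

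For $(B)\Rightarrow(C)$, the strategy via $\imax^\infty_X=(\imax^\infty_X)^2$ is the right one, but both concrete factorizations you propose are flawed. The choice $\psi_1=\sigma(\psi)\vert\psi\vert^{1/2}$ is not smooth where $\psi$ changes sign, and a ``smooth function of $\dist(\cdot,X)^2$'' does not work either, since $\dist(\cdot,X)^2$ is not $\mathcal{C}^\infty$ for a general closed set $X$ (take $X=\{0,1\}\subset\mathbb{R}$). The standard fix is to use a \emph{regularized distance}: a nonnegative $\rho\in\mathcal{C}^\infty(\Omega)$ with $\rho^{-1}(0)=X$ and $\rho$ locally comparable to $\dist(\cdot,X)$ (Whitney). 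One then composes $\rho$ with a one-variable function flat at $0$, chosen to decay slowly enough that $\psi/\theta$ remains smooth and flat; the verification is the same quotient estimate as in $(A)\Rightarrow(B)$. This is a repairable gap, but as written the construction does not go through.

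The genuine gap is $(C)\Rightarrow(A)$. Your first paragraph there is a heuristic, not an argument: you assert that failure of the {\L}ojasiewicz inequality allows a bump-sum $\psi\in\imax^\infty_X$ that cannot lie in $\mathcal{I}$, but you neither specify the weights nor check the two competing requirements (the weights must decay fast enough for $\psi$ to be flat on $X$, yet slowly enough that $\vert\psi(x_k)\vert$ beats $C\varphi(x_k)^{1/2}$ at the bad points, while keeping the supports disjoint and the series $\mathcal{C}^\infty$). This can be made to work, but it requires a careful choice along a sequence $x_k\to a\in X$ with $\varphi(x_k)<\dist(x_k,X)^{2k}$ and bumps supported in $B(x_k,\tfrac12\dist(x_k,X))$; none of that is in the proposal. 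Your ``cleaner'' alternative is circular: you invoke exactly the equivalence $(A)\Leftrightarrow(B)$ from \cite{Tou} that the theorem asserts. Since you already observed that $(C)\Rightarrow(B)$ is trivial, what you actually need is a self-contained proof of $(B)\Rightarrow(A)$; either carry out the bump construction in full, or use the functional-analytic route (open mapping for the multiplication map on a suitable Fr\'echet space), but as it stands this implication is not proved.
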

A finitely generated ideal $\mathcal{I}$ satisfying the equivalent properties $(A)$, $(B)$, $(C)$ is called a 
\emph{{\L}ojasiewicz ideal}. A principal ideal is {\L}ojasiewicz if and only if condition $(A)$ holds for a generator $\varphi$ of the ideal. In the general case of a finitely generated ideal with generators $ \varphi_1,\ldots,\varphi_p$, one can take $\varphi=\varphi_1^2+\cdots+\varphi_p^2$. {\L}ojasiewicz ideals play an important role in the study of ideals of differentiable functions; see for instance \cite{Ris, Tho, Tou}. In particular, every closed ideal of finite type is {\L}ojasiewicz, whereas the converse statement is false.  

In the present paper, we study a possible approach to {\L}ojasiewicz ideals in non-quasianalytic Denjoy-Carleman classes $\mathcal{C}_M(\Omega)$. While several papers have already been devoted to the study of closed ideals in $\mathcal{C}_M(\Omega)$ (see for example \cite{Th1, Th2, Th3}), a suitable notion of {\L}ojasiewicz ideal is still lacking, even in the case of principal ideals. This is due to the fact that if we put $\imax_{X,M}^\infty = \imax_X^\infty\cap \mathcal{C}_M(\Omega) $ and $ \mathcal{I}=\varphi \mathcal{C}_M(\Omega) $, where $ \varphi$ is a given element of $ \mathcal{C}_M(\Omega)$, it turns out that the usual {\L}ojasiewicz inequality \eqref{ineq1} is not a sufficient condition for the inclusion $ \mathcal{I}\subset \imax_{X,M}^\infty $, let alone for the equality $ \imax_{X,M}^\infty = \mathcal{I}\imax_{X,M}^\infty$. Therefore, it is natural to ask for a characterization of both of these properties in terms of the generator $ \varphi$, in the spirit of the characterization given by Theorem \ref{Touger} in the $\mathcal{C}^\infty$ case. 

In the case of principal ideals, a suitable characterization will be obtained in Theorem \ref{main}. In the statement, the {\L}ojasiewicz inequality \eqref{ineq1} has to be replaced by a quite different property involving successive derivatives of $ 1/\varphi$, which will be shown to be equivalent to the obvious Denjoy-Carleman version of property $(C)$, that is, to the equality $ \imax_{X,M}^\infty = \mathcal{I}\imax_{X,M}^\infty $. We are also able to get an equivalence with a corresponding version of property $(B)$,  provided we consider the inclusion $ \imax_{X,M}^\infty\subset \mathcal{I} $ together with a mild extra requirement on the flat points of $\varphi$.  

In order to prove these results, one has to deal with the fact that the constructive techniques used by Tougeron in the classical $\mathcal{C}^\infty$ case do not seem applicable to the $\mathcal{C}_M$ setting. Thus, the main part of our proof of Theorem \ref{main} is actually based on a functional-analytic argument. Once the theorem is proven, we discuss several related properties showing that basic results of the $ \mathcal{C}^\infty$ case are extended in a consistent way. For instance, we show that our $\mathcal{C}_M$ {\L}ojasiewicz condition holds for closed principal ideals, and we also provide a non-closed example. 

\section{Denjoy-Carleman classes}
\subsection{Notation} For any multi-index $ J=(j_1,\ldots,j_n) $ of $ \mathbb{N}^n $, we always denote the length $ j_1+\cdots+j_n $ of $ J $ by the corresponding lower case letter $ j $. We put $ J!=j_1!\cdots j_n! $,
$ D^J=\partial^j/\partial x_1^{j_1}\cdots\partial x_n^{j_n} $ and $ x^J=x_1^{j_1}\cdots x_n^{j_n} $. We denote by $\vert \cdot\vert$ the euclidean norm on $\mathbb{R}^n$; balls and distances in $\mathbb{R}^n$ will always be considered with respect to that norm.

If $ a$ is a point in $ \mathbb{R}^n$, and if $ f$ is a smooth function in a neighborhood of  $a$, we denote by $ T_af $ the formal Taylor series of $ f$ at $a$, that is, the element of $ \mathbb{C}[[x_1,\ldots,x_n]]$ defined by
\begin{equation*}
T_af= \sum_{J\in \mathbb{N}^n}\frac{1}{J!}D^Jf(a)x^J.
\end{equation*}
The function $f$ is said to be \emph{flat} at the point $a $ if $T_af=0 $. 

\subsection{Some properties of sequences}\label{sequences}
Let $ M=(M_j)_{j\geq 0} $ be a sequence of real numbers satisfying the following assumptions:
\begin{equation}\label{norm}
\text{ the sequence } M \text{ is increasing, with } M_0=1,
\end{equation}
\begin{equation}\label{logc}
\text{ the sequence } M \text{ is \emph{logarithmically convex}}.
\end{equation}
Property \eqref{logc} amounts to saying that $ M_{j+1}/M_j $ is increasing. Together with \eqref{norm}, it implies
\begin{equation}\label{logprod}
M_jM_k\leq M_{j+k}\ \textrm{ for any } (j,k)\in\mathbb{N}^2. 
\end{equation}
We say that the \emph{moderate growth} property holds if there is a constant $ A>0$ such that, conversely, 
\begin{equation}\label{modg}
M_{j+k}\leq A^{j+k} M_jM_k\ \textrm{ for any } (j,k)\in\mathbb{N}^2. 
\end{equation}
We say that $M $ satisfies the \emph{strong non-quasianalyticity} condition if there is a constant $A>0 $ such that
\begin{equation}\label{snqa}
\sum_{j\geq k}\frac{M_j}{(j+1)M_{j+1}}\leq A \frac{M_k}{M_{k+1}} \textrm{ for any } k\in\mathbb{N}.
\end{equation}
Notice that property \eqref{snqa} is indeed stronger than the classical Denjoy-Carl\-eman non-quasianalyticity condition
\begin{equation}\label{dcqa}
\sum_{j\geq 0}\frac{M_j}{(j+1)M_{j+1}}<\infty.
\end{equation}
The sequence $M$ is said to be \emph{strongly regular} if it satisfies \eqref{norm}, \eqref{logc}, \eqref{modg} and \eqref{snqa}.

\begin{exam}\label{exgev}
Let $ \alpha $ and $\beta$ be real numbers, with $ \alpha> 0 $. The sequence $M$ defined by $ M_j=j!^\alpha(\ln(j+e))^{\beta j} $ is strongly regular. This is the case, in particular, for Gevrey sequences $ M_j=j!^\alpha $.
\end{exam}

With every sequence $ M $ satisfying \eqref{norm} and \eqref{logc} we also associate the function $ h_M $ defined by $ h_M(t)=\inf_{j\geq 0}t^jM_j $ for any real $ t>0 $, and $ h_M(0)=0 $. From \eqref{norm} and \eqref{logc}, it is easy to see that the function $h_M$ is continuous, increasing, and it satisfies $ h_M(t)>0$ for $t>0$ and $ h_M(t)=1 $ for $t\geq 1/M_1 $. It also fully determines the sequence $M$, since we have $ M_j=\sup_{t>0}t^{-j}h_M(t)$. 

\begin{exam} Let $M$ be defined as in Example \ref{exgev}, and put $\eta(t)=\exp(-(t\vert\ln t\vert^\beta)^{-1/\alpha}) $ for $t>0$. Elementary computations show that there are constants $a>0$, $b>0$ such that $ \eta(at)\leq h_M(t)\leq \eta(bt)$ for $t\to 0 $. 
\end{exam} 

A technically important consequence
of the moderate growth assumption \eqref{modg} is the 
existence of a constant $ \rho\geq 1 $, depending
only on $ M $, such that
\begin{equation}\label{hfunct2}
h_M(t)\leq \big(h_M(\rho t)\big)^2\text{ for any }t\geq 0.
\end{equation}
We refer to \cite{CC} for a proof that \eqref{norm}, \eqref{logc} and \eqref{modg} imply \eqref{hfunct2}.

\subsection{Denjoy-Carleman classes}\label{classes}
Let $\Omega$ be an open subset of $\mathbb{R}^n $, and let $M$ be a sequence of real numbers satisfying \eqref{norm} and \eqref{logc}. We define $ \mathcal{C}_M(\Omega)$ as the space of functions $ f$ belonging to $ \mathcal{C}^\infty(\Omega) $ and satisfying the following condition: for any compact subset $ K $ of $ \Omega $, one can find a real $ \sigma >0 $ and a constant $C>0 $ such that  
\begin{equation}\label{roum}
\vert D^Jf(x)\vert \leq C\sigma^j j!M_j\textrm{ for any }ÊJ\in \mathbb{N}^n\textrm{ and } x\in K.
\end{equation}

Given a function $ f$ in $ \mathcal{C}^\infty(\Omega) $, a compact subset $K$ of $\Omega$ and a real number $ \sigma>0 $, put
\begin{equation*}
\Vert f\Vert_{K,\sigma}=\sup_{x\in K,\ J\in \mathbb{N}^n}\frac{\vert D^J f(x)\vert}{\sigma^j j!M_j}. 
\end{equation*}
We see that $ f$ belongs to $ \mathcal{C}_M(\Omega)$ if and only if, for any compact subset $K$ of $\Omega$, one can find a real $ \sigma>0$ such that $\Vert f\Vert_{K,\sigma}$ is finite ($\Vert f\Vert_{K,\sigma}$ then coincides with the smallest constant $ C $ for which \eqref{roum} holds). The function space $ \mathcal{C}_M(\Omega)$ is called the \emph{Denjoy-Carleman class of functions of class $ \mathcal{C}_M $ in the sense of Roumieu} (which corresponds to $\mathcal{E}_{\{j!M_j\}}(\Omega)$ in the notation of \cite{Kom}). 

From now on, we will assume that the sequence $M$ is strongly regular. In particular, it satisfies \eqref{dcqa}, which implies that $ \mathcal{C}_M(\Omega)$ contains compactly supported functions.  We denote by $ \mathcal{D}_M(\Omega)$ the space of elements of $ \mathcal{C}_M(\Omega)$ with compact support in $ \Omega$. 

For the reader's convenience, we now recall some basic topological facts about $ \mathcal{C}_M(\Omega)$ and $ \mathcal{D}_M(\Omega)$, without proof (we refer  to \cite{Kom} for the details). With each Whitney 1-regular compact subset $K$ of $ \Omega$, and each integer $ \nu\geq 1 $, we associate the vector space $ \mathcal{C}_{M,K,\nu} $ of all functions $ f $ which are $ \mathcal{C}^\infty $-smooth on $K $ in the sense of Whitney, and such that $\Vert f\Vert_{K,\nu}<\infty $. 
Then $ \mathcal{C}_{M,K,\nu} $ is a Banach space for the norm $ \Vert \cdot\Vert_{K,\nu}$ and it can be shown that for $ \nu<\nu' $, the inclusion $ \mathcal{C}_{M,K',\nu}\hookrightarrow  \mathcal{C}_{M,K',\nu'} $ is compact. We define the Denjoy-Carleman class $ \mathcal{C}_M(K) $ as the reunion of all spaces $ \mathcal{C}_{M,K,\nu} $ with $ \nu \geq 1 $. Endowed with the inductive topology, $ \mathcal{C}_M(K) $ is a (DFS)-space (or \emph{Silva space}).  
Given an exhaustion $(K_j)_{j\geq 1} $ of $ \Omega$ by Whitney 1-regular compact subsets, the Denjoy-Carleman class $ \mathcal{C}_M(\Omega) $ can be identified with the projective limit of all (DFS)-spaces $ \mathcal{C}_M(K_j)$.  

Similarly, denote by $ \mathcal{D}_{M,K,\nu}$ the space of all functions $ f \in \mathcal{C}^\infty(\Omega)$ such that $ \supp f\subset K $ and $\Vert f\Vert_{K,\nu}<\infty $. Then $ \mathcal{D}_{M,K,\nu} $ is a Banach space and we have the following properties: for $ K\subset K' $, the space $ \mathcal{D}_{M,K,\nu} $ is a closed subspace of $ \mathcal{D}_{M,K',\nu} $, and for $ \nu<\nu' $, the inclusion $ \mathcal{D}_{M,K',\nu}\hookrightarrow  \mathcal{D}_{M,K',\nu'} $ is compact. For any integer $ \nu\geq 1 $, put $ \mathcal{D}_\nu=\mathcal{D}_{M,K_\nu,\nu} $, $ \Vert \cdot\Vert_\nu=\Vert \cdot\Vert_{K_\nu,\nu} $, and notice that we have $ \mathcal{D}_M(\Omega)=\bigcup_{\nu\geq 1} \mathcal{D}_\nu $ as a set. By the preceding remarks, we have a compact injection $ \mathcal{D}_\nu\hookrightarrow \mathcal{D}_{\nu+1} $. Thus, the space $ \mathcal{D}_M(\Omega) $ is another (DFS)-space for the corresponding inductive limit topology. 

\subsection{Some basic properties of $\mathcal{C}_M(\Omega)$}\label{basicprop}
Properties \eqref{norm} and \eqref{logc} of the sequence $M$ ensure that $ \mathcal{C}_M(\Omega) $ is an algebra containing the algebra of real-analytic functions, and that $\mathcal{C}_M$ regularity is stable under composition \cite{Rou}. This implies, in particular, the following invertibility property. 

\begin{lem}[\cite{Rou}]\label{invers}
If the function $f$ belongs to $ \mathcal{C}_M(\Omega)$ and has no zero in $ \Omega$, then the function $1/f$ belongs to $ \mathcal{C}_M(\Omega) $.
\end{lem}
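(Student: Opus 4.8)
The plan is to reduce the statement to a local problem and then invoke the two structural facts recalled just above: that $\mathcal{C}_M(\Omega)$ is an algebra stable under composition with $\mathcal{C}_M$ maps (the Roumieu result cited before the lemma), and that the scalar function $z\mapsto 1/z$ is real-analytic, hence of class $\mathcal{C}_M$, on any region avoiding the origin. First I would observe that membership in $\mathcal{C}_M(\Omega)$ is a local-plus-uniform-estimate condition: it suffices to prove, for each compact $K\subset\Omega$, an estimate of the form $|D^J(1/f)(x)|\le C\sigma^j j!M_j$ on $K$. Since $f$ has no zero in $\Omega$, on a fixed compact $K$ the values $f(x)$ stay in a compact subset $L$ of $\mathbb{C}\setminus\{0\}$ (indeed of $\mathbb{R}\setminus\{0\}$, if $f$ is real-valued; the complex-valued case is handled the same way after splitting, or simply by regarding $1/z$ as real-analytic on $\mathbb{C}^*\cong\mathbb{R}^2\setminus\{0\}$). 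The map $g:z\mapsto 1/z$ is real-analytic on a neighbourhood of $L$, hence in particular lies in $\mathcal{C}_M$ there, because \eqref{norm} and \eqref{logc} guarantee that $\mathcal{C}_M$ contains the real-analytic functions. Then $1/f=g\circ f$ is a composition of a $\mathcal{C}_M$ function with a $\mathcal{C}_M$ map, so stability under composition gives $1/f\in\mathcal{C}_M$ on a neighbourhood of each point of $K$; a partition-of-unity/compactness argument over $\Omega$ (or just the fact that the defining condition is already stated compact-by-compact) finishes the proof.

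An alternative, more self-contained route avoids quoting the composition theorem and instead estimates the derivatives of $1/f$ directly via the Fa\`a di Bruno / Leibniz formalism. Writing $u=1/f$, one has $fu=1$, so differentiating $J$ times by the Leibniz rule yields $f\,D^Ju = -\sum_{0<I\le J}\binom{J}{I}D^If\,D^{J-I}u$, whence a recursion $D^Ju = -f^{-1}\sum_{0<I\le J}\binom{J}{I}D^If\,D^{J-I}u$. Using the hypothesis bound $|D^If(x)|\le C_1\sigma_1^i i!M_i$ on $K$, a lower bound $|f|\ge c>0$ on $K$, and the logarithmic convexity consequence \eqref{logprod} ($M_iM_k\le M_{i+k}$) to absorb the products $M_i M_{j-i}\le M_j$, one proves by induction on $j$ an estimate $\sup_K|D^Ju|\le C_2\sigma_2^j j!M_j$ for suitable constants; the combinatorial factor $\sum_{I}\binom{J}{I}$ and the factorials $i!(j-i)!\le j!$ are controlled by enlarging $\sigma_2$, which is exactly where the moderate-growth constant would intervene if needed, though in fact \eqref{logprod} alone suffices here since we only need one-sided multiplicativity of $M$.

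I would present the first (composition-based) argument as the main proof, since it is short and the ambient paper has just invoked precisely these stability properties; the second I would mention only as a remark or omit. The only point that requires a word of care is the passage from "locally $\mathcal{C}_M$" to "$\mathcal{C}_M$ on $\Omega$": this is immediate from the definition in \ref{classes}, because \eqref{roum} is quantified over compact subsets $K$ and any compact $K$ is covered by finitely many of the local neighbourhoods on which the composition argument applies, with the finitely many constants $\sigma, C$ replaced by their maximum. There is no genuine obstacle; the statement is essentially a formal consequence of the algebra-and-composition structure of $\mathcal{C}_M$, and the "hard part" is merely bookkeeping to confirm that $f$ avoiding $0$ on all of $\Omega$ does force $|f|$ to be bounded below on each compact $K$, which follows from continuity of $f$ and compactness of $K$.
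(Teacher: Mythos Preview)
Your proposal is correct and matches the paper's approach exactly: the paper does not give a separate proof of this lemma but simply states that it follows from the facts, cited from \cite{Rou}, that $\mathcal{C}_M(\Omega)$ is an algebra containing the real-analytic functions and is stable under composition---precisely your composition argument $1/f=g\circ f$ with $g(z)=1/z$ real-analytic. Your alternative direct Leibniz/induction route is a reasonable self-contained variant, but the paper neither uses nor needs it.
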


It is also known  that the implicit function theorem holds within the framework of $ \mathcal{C}_M$ regularity \cite{Kom2}. Thus, $\mathcal{C}_M$ manifolds and submanifolds can be defined in the usual way.
 
The strong regularity assumption on $M$ ensures that suitable versions of Whitney's extension theorem and Whitney's spectral theorem hold 
in $ \mathcal{C}_M(\Omega) $; see \cite{BBMT, B, CC, CC2}. The extension result relies on a crucial construction of cutoff functions whose successive derivatives satisfy a certain type of optimal estimates. This construction is due to Bruna \cite{B}; see also {\cite[Proposition 4]{CC}}. Up to a rescaling in the statement of \cite{CC}, the result can be written as follows.

\begin{lem}[\cite{B,CC}]\label{cutoff} 
There is a constant $ c>0 $ such that, for any real numbers $ r>0 $ and $ \sigma>0 $, one can find a function $ \chi_{r,\sigma} $ belonging to $ \mathcal{C}_M(\mathbb{R}^n) $, compactly supported in the ball $B= B(0,r)$, and such that we have $ 0\leq \chi_{r,\sigma} \leq 1 $, $  \chi_{r,\sigma}(t) = 1 $ for $ \vert t\vert\leq r/2 $ and  
$\Vert \chi_{r,\sigma}\Vert_{\overline{B},c\sigma} \leq (h_M(\sigma r))^{-1} $. 
\end{lem}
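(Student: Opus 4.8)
The plan is to quote Bruna's cutoff construction in the form given by Chaumat and Chollet \cite[Proposition 4]{CC} and to recover the two-parameter statement from it by a dilation. Since Denjoy--Carleman classes are invariant under dilations and $h_M$ depends only on the product $\sigma r$, it suffices to produce, for each $s>0$, a function $\psi_s\in\mathcal C_M(\mathbb R^n)$ supported in $B(0,1)$, equal to $1$ on $B(0,1/2)$, with $0\le\psi_s\le1$ and $\Vert\psi_s\Vert_{\overline{B(0,1)},cs}\le (h_M(s))^{-1}$. Indeed, taking $s=\sigma r$ and $\chi_{r,\sigma}=\psi_s(\cdot/r)$, one gets a function supported in $B(0,r)$, equal to $1$ on $B(0,r/2)$, with $0\le\chi_{r,\sigma}\le1$, and, since $D^J\chi_{r,\sigma}(x)=r^{-j}(D^J\psi_s)(x/r)$, with $\Vert\chi_{r,\sigma}\Vert_{\overline B,c\sigma}=\Vert\psi_s\Vert_{\overline{B(0,1)},c\sigma r}\le(h_M(\sigma r))^{-1}$, as wanted.

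For the normalized statement I would use the mollification construction. Fix a non-negative $\theta\in\mathcal D_M(B(0,1))$ with $\int\theta=1$ (it exists because \eqref{snqa} implies \eqref{dcqa}), write $\theta_\lambda=\lambda^{-n}\theta(\cdot/\lambda)$, and set $\psi_s=\mathbf 1_{B(0,3/4)}*\theta_{\lambda_1}*\theta_{\lambda_2}*\cdots$ for a decreasing sequence $(\lambda_k)_{k\ge1}$ of positive numbers with $\sum_k\lambda_k\le 1/4$. Then $0\le\psi_s\le1$ automatically, $\psi_s=1$ on $B(0,3/4-\sum_k\lambda_k)\supset B(0,1/2)$, $\supp\psi_s\subset B(0,1)$, and, distributing one derivative onto each of the $j$ factors with largest scale, $\Vert D^J\psi_s\Vert_\infty\le C_1^{\,j}\prod_{k=1}^{j}\lambda_k^{-1}$ where $C_1$ depends only on $\theta$ (and $n$). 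So the problem reduces to choosing the $\lambda_k$, still decreasing and with $\sum_k\lambda_k\le1/4$, in such a way that
\[
\prod_{k=1}^{j}\lambda_k\ \ge\ \frac{C_1^{\,j}\,h_M(s)}{(cs)^{j}\,j!\,M_j}\qquad\text{for all }j\ge0 ,
\]
for instance by taking $\lambda_k=\sup_{i\ge k}\bigl(C_1^{\,i}h_M(s)/((cs)^{i}\, i!\,M_i)\bigr)^{1/i}$.

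The heart of the matter, and the step I expect to be the main obstacle, is to check that for a suitable constant $c$ depending only on $M$ and $n$ this sequence indeed satisfies $\sum_k\lambda_k\le 1/4$, uniformly in $s$. This is exactly where the strong non-quasianalyticity assumption \eqref{snqa} enters decisively: it is what keeps the relevant series bounded independently of $s$, the extremely small factor $h_M(s)^{1/k}$ compensating, for small $k$, the growth of $(i!M_i)^{-1/i}$ that would otherwise make the series diverge. Since this estimate is carried out in detail in \cite{B} and \cite[Proposition 4]{CC}, I would invoke those references for it. (For $s\ge 1/M_1$ one has $h_M(s)=1$, and then a single fixed $\mathcal C_M$ cutoff already works; so only the range of small $s$ really requires the argument above.)
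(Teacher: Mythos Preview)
Your proposal is correct and mirrors the paper's treatment: the paper gives no proof of this lemma, merely citing \cite{B} and \cite[Proposition~4]{CC} and noting that the stated form follows ``up to a rescaling in the statement of \cite{CC}''. You have made that rescaling explicit via the substitution $s=\sigma r$, $\chi_{r,\sigma}=\psi_s(\cdot/r)$, and your sketch of the underlying mollification construction, with the crucial summability estimate $\sum_k\lambda_k\le 1/4$ deferred to the same references, is faithful to Bruna's original argument.
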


We shall also need a basic result on flat functions. Given a closed subset $Z$ of $ \Omega$, recall that $\imax_{Z,M}^\infty $ denotes the ideal of functions of $ \mathcal{C}_M(\Omega)$ which are flat at each point of $Z$. 

\begin{lem}\label{flatness}
Let $f$ be an element of $\imax_{Z,M}^\infty $. For any compact subset $K$ of $\Omega$, there are positive constants $ c_1$ and $ c_2$ such that, for any multi-index $I $ in $\mathbb{N}^n$ and any $x$ in $K$, we have
\begin{equation}\label{flatmaj}
\vert D^I f(x)\vert \leq c_1 c_2^i i!M_i h_M(c_2\dist(x,Z)).
\end{equation} 
\end{lem}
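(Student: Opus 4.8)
The plan is to exploit the flatness of $f$ at points of $Z$ via Taylor's formula with integral remainder, controlling the remainder by the $\mathcal{C}_M$ estimates that $f$ satisfies on a slightly enlarged compact set, and then optimizing over the order of the Taylor expansion to produce the factor $h_M$. Fix a compact $K\subset\Omega$ and choose a compact neighborhood $L$ of $K$ in $\Omega$, say $L=\{x:\dist(x,K)\le\delta\}$ for small $\delta>0$. Since $f\in\mathcal{C}_M(\Omega)$, there are $\sigma>0$ and $C>0$ with $\vert D^J f(y)\vert\le C\sigma^j j!M_j$ for all $y\in L$ and all $J$.

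First I would dispose of the trivial case: if $\dist(x,Z)\ge\delta/2$ (say), then the desired bound \eqref{flatmaj} follows directly from the $\mathcal{C}_M$ estimate on $K$ together with the fact that $h_M$ is increasing and bounded below by a positive constant on $[\delta/2,\infty)$ (indeed $h_M(t)=1$ for $t\ge 1/M_1$, and $h_M$ is continuous and positive for $t>0$), so one can absorb everything into the constants $c_1,c_2$. In the main case $d:=\dist(x,Z)<\delta/2$, pick a point $a\in Z$ with $\vert x-a\vert=d$; then the whole segment $[a,x]$ lies in $L$. Since $f$ is flat at $a$, all derivatives of $D^I f$ vanish at $a$ up to any order, so for every integer $N\ge 0$ Taylor's formula with integral remainder gives
\begin{equation*}
D^I f(x)=\sum_{\vert K'\vert=N}\frac{N}{K'!}(x-a)^{K'}\int_0^1 (1-t)^{N-1}D^{I+K'}f(a+t(x-a))\,dt,
\end{equation*}
whence $\vert D^I f(x)\vert\le \sum_{\vert K'\vert=N}\frac{N}{K'!}\,d^{\,N}\cdot\frac{1}{N}\,C\sigma^{i+N}(i+N)!\,M_{i+N}$. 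Using $\sum_{\vert K'\vert=N}\frac{1}{K'!}=\frac{n^N}{N!}$, $(i+N)!\le 2^{i+N}i!\,N!$, and the moderate growth inequality \eqref{modg} in the form $M_{i+N}\le A^{i+N}M_i M_N$, the right-hand side is at most $C(2A\sigma)^i i!M_i\cdot (2An\sigma)^N M_N\, d^{\,N}$.

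The final step is to optimize over $N$. We have obtained $\vert D^I f(x)\vert\le C(2A\sigma)^i i!M_i\cdot\inf_{N\ge 0}\big((2An\sigma\, d)^N M_N\big)$, and by the very definition of $h_M$, $\inf_{N\ge 0}(sd)^N M_N=h_M(sd)$ with $s=2An\sigma$. Setting $c_2=\max(2A\sigma,\,2An\sigma)$ (and enlarging it if necessary to dominate the constant in the trivial case) and $c_1=C$ yields \eqref{flatmaj}. I do not expect a serious obstacle here; the only point requiring a little care is the passage through the trivial case $\dist(x,Z)\ge\delta/2$ and making sure the estimate on the compact $K$ is not lost when $Z$ is far from $K$ — but since $h_M$ is bounded below by a positive constant away from $0$, this is harmless. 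A second minor point is that one must expand around a point $a\in Z$ with the segment $[a,x]$ staying inside the compact $L$ where the $\mathcal{C}_M$ bounds hold, which is guaranteed by choosing $d<\delta/2$; thus all derivatives evaluated at $a+t(x-a)$ are controlled.
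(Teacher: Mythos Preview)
Your argument is correct and follows essentially the same route as the paper's proof: enlarge $K$ to a compact neighbourhood, split into the cases $\dist(x,Z)$ small or large, and in the small case expand $D^If$ by Taylor about a nearest point of $Z$, use flatness to kill the Taylor polynomial, bound the remainder via the $\mathcal{C}_M$ estimates together with $(i+N)!\le 2^{i+N}i!\,N!$ and moderate growth, and finally take the infimum over the expansion order to produce $h_M$. The only cosmetic difference is that you write the integral remainder explicitly whereas the paper invokes the Taylor formula more tersely; one tiny slip is that the integral-remainder formula as written needs $N\ge 1$, but the case $N=0$ is covered by the raw $\mathcal{C}_M$ bound, so the infimum over $N\ge 0$ is justified.
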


\begin{proof} For any real $r>0$, put $K_r=\{y\in \Omega : \dist(y,K)\leq r\} $. If $r$ is chosen small enough, $K_r$ is a compact subset of $\Omega$. Thus, there is a constant $ \sigma>0$ such that, for any $y\in K_r$, $I\in \mathbb{N}^n$ and $J\in \mathbb{N}^n$, we have
$ \vert D^{I+J}f(y)\vert\leq \Vert f\Vert_{K_r,\sigma}\sigma^{i+j}(i+j)! M_{i+j}$. Using \eqref{modg} and the elementary estimate $(i+j)!\leq 2^{i+j}i!j!$, we get 
\begin{equation}\label{normder}
\vert D^{I+J} f(y)\vert\leq c_1c_2^i i!M_i c_2^j j!M_j.
\end{equation}
with $c_1=\Vert f\Vert_{K_r,\sigma}$ and $c_2=2A\sigma$. Now let $x$ be a point in $K$, and let $z$ be a point in $Z$ such that 
\begin{equation}\label{dista2}
\vert x-z\vert=\dist(x,Z).
\end{equation}
If $\dist(x,Z)\leq r $, then the segment $[x,z]$ is contained in $K_r$. Let $j$ be an integer. Since $D^I f$ is flat at $z$, the Taylor formula easily yields $\vert D^If(x)\vert \leq n^j\sup_{\vert J\vert=j,\, y\in K_r} \vert D^{I+J}(y)\vert \vert x-z\vert^j /j! $. Using \eqref{normder} and \eqref{dista2}, and taking the infimum with respect to $j$, we obtain \eqref{flatmaj} up to the replacement of $c_2$ by $nc_2$.  If $\dist(x,Z)> r $, the estimate is a simple consequence of the definition of $\mathcal{C}_M(\Omega)$, up to another modification of $c_1$ and $c_2$. 
\end{proof}

\section{{\L}ojasiewicz ideals}

The following notion will serve as a replacement for the standard {\L}ojasiewicz inequality. 

\begin{defin}\label{Lcond} 
Let $ \varphi$ be a non-zero element of $ \mathcal{C}_M(\Omega)$ and let $X$ be the zero set of $ \varphi$. We say that $\varphi$ \emph{satisfies the $ \mathcal{C}_M $ {\L}ojasiewicz condition} if, for any compact subset $K$ of $\Omega$ and any real $\lambda>0 $, one can find positive constants $ C $ and $ \sigma $ (depending on $ K$ and $ \lambda$) such that, for any multi-index $ J\in\mathbb{N}^n $ and any $ x\in K\setminus X$, we have
\begin{equation}\label{mainestim}
\left\vert D^J(1/\varphi)(x)\right\vert\leq \frac{C \sigma^j j!M_j}{h_M(\lambda \dist(x,X))}.
\end{equation}
\end{defin}

\begin{rem}\label{neighb}
From the basic properties of $ h_M$ in Section \ref{sequences}, we see that, on a given open subset $ \{x\in\Omega : \dist(x,X)>\delta \} $ with $\delta>0$, the $\mathcal{C}_M$ {\L}ojasiewicz condition amounts to nothing more than the conclusion of Lemma \ref{invers}. It is relevant only as a bound on the explosion of $1/\varphi$ and its derivatives in a neighborhood of the zeros of $ \varphi $.  
\end{rem}
 
In Section \ref{additional}, we will provide examples of functions for which the $\mathcal{C}_M$ {\L}ojasiewicz condition holds. Lemma \ref{boundary} below shows that such functions cannot have ``too many flat points'' on the boundary of their zero set.  

\begin{lem}\label{boundary}
Let $ \varphi$ be a non-zero element of $ \mathcal{C}_M(\Omega) $ and let $X$ be its zero set. Assume that $\varphi$ satisfies the $ \mathcal{C}_M$ {\L}ojasiewicz condition, and let $ X_{\infty} = \{a\in X : T_a\varphi= 0\} $ be the set of points of flatness of $ \varphi $. Then $ X\setminus X_{\infty} $ is dense in the boundary $\partial X$ of $ X $. 
\end{lem}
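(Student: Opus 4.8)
The plan is to argue by contradiction. Suppose $X\setminus X_\infty$ is not dense in $\partial X$; then there exist a point $a\in\partial X$ and a radius $r>0$ with $\overline{B(a,r)}\subset\Omega$ such that $\partial X\cap B(a,r)\subseteq X_\infty$. The first — and, I think, main — point is to upgrade this to the statement that \emph{every} zero of $\varphi$ in $B(a,r)$ is a flat point: indeed, if $z$ lies in the interior of $X$, then $\varphi$ vanishes identically near $z$, so trivially $T_z\varphi=0$; since $X\cap B(a,r)$ is the disjoint union of $\mathrm{int}\,X\cap B(a,r)$ and $\partial X\cap B(a,r)$, we conclude $X\cap B(a,r)\subseteq X_\infty$. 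After replacing $r$ by a smaller radius we may assume $X\cap\overline{B(a,r)}\subseteq X_\infty$.

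Now set $Z=X\cap\overline{B(a,r)}$, a compact subset of $\Omega$ on which $\varphi$ is flat, so that $\varphi\in\imax_{Z,M}^\infty$. Applying Lemma \ref{flatness} to $\varphi$ on the compact set $K=\overline{B(a,r/2)}$ (with $I=0$) yields constants $c_1,c_2>0$ such that $\vert\varphi(x)\vert\leq c_1 h_M(c_2\dist(x,Z))$ for all $x\in K$. A short geometric remark removes $Z$ in favour of $X$: for $x\in B(a,r/2)$, a nearest point $y\in X$ to $x$ satisfies $\vert x-y\vert=\dist(x,X)\leq\vert x-a\vert<r/2$, whence $\vert y-a\vert<r$ and $y\in Z$; therefore $\dist(x,Z)=\dist(x,X)$, and consequently
\begin{equation*}
\vert\varphi(x)\vert\leq c_1 h_M\big(c_2\dist(x,X)\big)\qquad\text{for all }x\in B(a,r/2).
\end{equation*}

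It remains to confront this with the $\mathcal{C}_M$ {\L}ojasiewicz condition. Apply \eqref{mainestim} with the compact set $K=\overline{B(a,r/2)}$, with $J=0$ (using $M_0=1$), and with the parameter $\lambda=\rho c_2$, where $\rho\geq 1$ is the constant furnished by \eqref{hfunct2}; this gives $\vert\varphi(x)\vert\geq C^{-1}h_M(\lambda\dist(x,X))$ for $x\in K\setminus X$. Combining with the bound from the previous paragraph and invoking \eqref{hfunct2} in the form $h_M(c_2 t)\leq\big(h_M(\rho c_2 t)\big)^2=\big(h_M(\lambda t)\big)^2$ with $t=\dist(x,X)$, we obtain $C^{-1}h_M(\lambda t)\leq c_1\big(h_M(\lambda t)\big)^2$, hence $h_M\big(\lambda\dist(x,X)\big)\geq(Cc_1)^{-1}$ for every $x\in B(a,r/2)\setminus X$. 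But $a\in\partial X$, so there is a sequence $x_k\to a$ with $x_k\notin X$; then $\dist(x_k,X)\leq\vert x_k-a\vert\to 0$, and since $h_M$ is continuous with $h_M(0)=0$ the left-hand side tends to $0$ — a contradiction. Hence no such $a$ exists, and $X\setminus X_\infty$ is dense in $\partial X$. Beyond the reduction in the first paragraph and the distance identification, the argument is driven entirely by the self-improving estimate \eqref{hfunct2}, and I do not anticipate any difficulty other than bookkeeping with the radii.
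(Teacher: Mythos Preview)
Your proof is correct and follows essentially the same approach as the paper: argue by contradiction, use the flatness estimate (Lemma \ref{flatness}) to bound $|\varphi|$ from above by $h_M$ of the distance, use the $\mathcal{C}_M$ {\L}ojasiewicz condition with $J=0$ to bound it from below, and force a contradiction via the self-improving inequality \eqref{hfunct2}. The only cosmetic difference is that the paper takes the flat set to be $\omega\cap\partial X$ and applies the flatness lemma on the open neighborhood $\omega$, whereas you first observe that interior points of $X$ are automatically flat and work with $Z=X\cap\overline{B(a,r)}$; both lead to the same distance identification $\dist(x,Z)=\dist(x,X)$ on a smaller ball and to the same contradiction.
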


\begin{proof} Notice that $\varphi $ is necessarily flat at each interior point of $X$, hence the inclusion $  X\setminus X_{\infty}\subset \partial X $. We prove the density property by contradiction. If the property is not true, there are a point $ a$ in $ \partial X$ and an open neighborhood $ \omega $ of $a $ in $\Omega $, such that $ \varphi $ is flat on $ \omega\cap\partial X $. Put $ K= \overline{B(a,r)} $ with $ r=\frac{1}{2}\dist(a,X\setminus\omega) $. Then $K$ is a compact subset of $ \omega$ and we have 
\begin{equation}\label{prop1}
\dist(x,\omega\cap\partial X)=\dist(x,\partial X)=\dist(x,X)\textrm{ for any } x \in K.
\end{equation}
Using Lemma \ref{flatness} on the open set $ \omega$, with $ f=\varphi_{\vert\omega} $, $ Z=\omega\cap\partial X $ and $ I=0$, we see that there are constants $c_1$ and $c_2$ such that we have
$\vert\varphi(x)\vert \leq c_1 h_M(c_2\dist(x,\omega\cap \partial X))$ for any $ x \in K$.
Taking property \eqref{hfunct2} into account, we obtain, for any $ x\in K $,
\begin{equation}\label{prop2}
\vert\varphi(x)\vert \leq c_1 h_M(c_3\dist(x,\omega\cap \partial X))^2
\end{equation}
with $c_3=\rho c_2 $. 
On the other hand, using the $\mathcal{C}_M$ {\L}ojasiewicz condition with $\lambda=c_3 $ and $ J=0$, we obtain a constant $ c_4>0 $ such that, for any $ x\in K\setminus X $, 
\begin{equation}\label{prop3}
\vert \varphi(x)\vert\geq c_4 h_M(c_3\dist(x, X)).
\end{equation}
Gathering \eqref{prop1}, \eqref{prop2} and \eqref{prop3}, we obtain $ h_M(c_3 d(x,X))\geq c_4/c_1 $ for any $ x\in K\setminus X $, which is impossible since $ K\setminus X$ has at least an accumulation point on $X$, namely the point $a$. 
\end{proof}

We are now able to state the main result.

\begin{thm}\label{main}
Let $\varphi$ be a non-zero element of $ \mathcal{C}_M(\Omega)$, let $X$ be its zero set, and let $X_{\infty}$ be its set of points of flatness. Put $\mathcal{I}=\varphi \mathcal{C}_M(\Omega)$. The following properties are equivalent:
\begin{enumerate}[{$(A')$}]
\item The function $ \varphi $ satisfies the $ \mathcal{C}_M$ {\L}ojasiewicz condition.
\item  $\imax_{X,M}^\infty\subset \mathcal{I} $ and $ X\setminus X_{\infty} $ is dense in $ \partial X$.
\item $ \imax_{X,M}^\infty=\mathcal{I}\imax_{X,M}^\infty $.
\end{enumerate}
\end{thm}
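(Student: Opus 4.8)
The plan is to prove the chain of implications $(A')\Rightarrow(C')\Rightarrow(B')\Rightarrow(A')$, treating the functional-analytic core as the passage from $(A')$ to $(C')$.

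\medskip

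\textbf{Step 1: $(A')\Rightarrow(C')$.} The inclusion $\mathcal{I}\imax_{X,M}^\infty\subset\imax_{X,M}^\infty$ is automatic since $\varphi$ vanishes on $X$, so the content is the reverse inclusion: every $f\in\imax_{X,M}^\infty$ can be written $f=\varphi g$ with $g\in\imax_{X,M}^\infty$. The naive candidate is $g=f/\varphi$, which is smooth on $\Omega\setminus X$ by Lemma \ref{invers}; the task is to show it extends to a $\mathcal{C}_M$ function on $\Omega$ that is itself flat on $X$. I would estimate $D^J g$ on $K\setminus X$ via the Leibniz formula applied to $f\cdot(1/\varphi)$, combining the flatness bound for $f$ from Lemma \ref{flatness} (which supplies a factor $h_M(c_2\dist(x,X))$) with the $\mathcal{C}_M$ {\L}ojasiewicz bound \eqref{mainestim} for the derivatives of $1/\varphi$ (which costs a factor $1/h_M(\lambda\dist(x,X))$). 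Choosing $\lambda$ small relative to $c_2$ — more precisely, using \eqref{hfunct2} to split $h_M(c_2 d)\geq h_M(\rho\lambda d)^2\geq h_M(\lambda d)\,h_M(\lambda d)$ after a suitable rescaling — lets one cancel the denominator and keep a surplus factor $h_M(\lambda\dist(x,X))\to 0$, which bounds $D^Jg$ locally uniformly (so $g\in\mathcal{C}_M$, with Whitney-smooth extension across $X$) and simultaneously forces $g$ to be flat on $X$. The moderate-growth constant from \eqref{modg} absorbs the binomial factors and the shift in indices in the Leibniz sum. I expect the bookkeeping of constants ($\sigma$, $\lambda$, $\rho$, $A$) to be the fiddly part here, but not conceptually hard.

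\medskip

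\textbf{Step 2: $(C')\Rightarrow(B')$.} The inclusion $\imax_{X,M}^\infty=\mathcal{I}\imax_{X,M}^\infty\subset\mathcal{I}$ is immediate. For the density of $X\setminus X_\infty$ in $\partial X$ I would argue by contradiction exactly as in the proof of Lemma \ref{boundary}: if $\varphi$ were flat on $\omega\cap\partial X$ for some open $\omega$, pick a compactly supported $\mathcal{C}_M$ cutoff $\theta$ (via Lemma \ref{cutoff}) equal to $1$ near a boundary point $a$ and supported in $\omega$; then $\theta\varphi$ is flat on $\partial X\cap\operatorname{supp}\theta$ and, being also flat on the interior of $X$, it lies in $\imax_{X,M}^\infty$. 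Writing $\theta\varphi=\varphi g$ with $g\in\imax_{X,M}^\infty\subset\mathcal{C}_M(\Omega)$ gives $g=\theta$ near $a$ on $\Omega\setminus X$, hence by continuity $g\equiv 1$ on a neighborhood of $a$ minus $X$; but $g$ is flat at $a\in\partial X=\overline{X\setminus X^{\circ}}$, and approaching $a$ through $X\setminus X^\circ$ (or using that $a$ is an accumulation point of $\Omega\setminus X$) yields $1=0$, the contradiction. (One must check $K\setminus X$ accumulates at $a$, which holds since $a\in\partial X$.)

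\medskip

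\textbf{Step 3: $(B')\Rightarrow(A')$ — the main obstacle.} This is where Tougeron's constructive method is unavailable and a functional-analytic argument is needed. From $\imax_{X,M}^\infty\subset\mathcal{I}=\varphi\,\mathcal{C}_M(\Omega)$ one gets a purely algebraic division statement; the goal is to upgrade it to the \emph{quantitative} bound \eqref{mainestim}. I would fix a compact $K$ and a Whitney $1$-regular compact neighborhood, and consider the division map $D\colon \mathcal{C}_M(K')\to\mathcal{C}_M(K')$, $g\mapsto \varphi g$; restricted to the closed ideal-like subspace of flat functions it is a continuous injective linear map between (DFS)-spaces whose range, by hypothesis, contains $\imax_{X,M}^\infty$. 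By the open mapping theorem for (DFS)-spaces (or Baire-category/de Wilde closed-graph machinery), the inverse $f\mapsto f/\varphi$ is continuous from the flat-function subspace into some $\mathcal{C}_{M,K,\nu}$, i.e.\ there exist $\sigma,C$ with $\Vert f/\varphi\Vert_{K,\sigma}\leq C\Vert f\Vert_{K',\sigma'}$ for all flat $f$. Then I would feed in a well-chosen test family: for each $x\in K\setminus X$ use Lemma \ref{cutoff} to build $f_x=\chi_{r,\tau}(\cdot-z)$-type functions localized near the nearest point $z\in X$ to $x$, flat on $X$, with $\Vert f_x\Vert$ controlled by $h_M(\tau r)^{-1}$ and with $f_x$ and enough of its derivatives normalized at $x$; evaluating $D^J(f_x/\varphi)(x)$ and using $D^J f_x(x)=0$ for the localized part recovers $D^J(1/\varphi)(x)$ times a controlled factor, and the operator bound then produces \eqref{mainestim} with the $h_M(\lambda\dist(x,X))$ in the denominator coming precisely from the cutoff estimate. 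The density hypothesis $X\setminus X_\infty$ dense in $\partial X$ enters to ensure the construction is not vacuous and that the relevant subspace of flat functions is large enough for the test functions to live in it. The delicate points are: (i) identifying the correct closed subspace and verifying the open-mapping hypotheses (the flat-function space must be shown to be a well-behaved closed subspace of the (DFS)-space, or one works on $\mathcal{D}_M$); and (ii) engineering the test functions so that the localization simultaneously kills $D^J f_x(x)$ for the inessential terms and controls $h_M$ in the estimate — this matching of the cutoff scale $r\sim\dist(x,X)$ with the parameter $\lambda$ is the crux.
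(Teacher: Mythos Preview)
Your Steps 1 and 2 are sound. In fact Step 1 essentially reproduces the paper's argument for $(A')\Rightarrow(B')$, and your estimate already shows $g=f/\varphi$ is flat on $X$, so you get $(C')$ directly. Your density argument in Step 2, using $\theta\varphi\in\imax_{X,M}^\infty$ and deriving $g=\theta=1$ near $a$ while $g$ is flat at $a$, is a valid alternative to the paper's Lemma~\ref{boundary}.

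The genuine gap is Step 3. Two concrete problems:

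\emph{Surjectivity.} You propose to apply the open mapping theorem to $g\mapsto\varphi g$ restricted to flat functions. But under $(B')$ alone you only know $\imax_{X,M}^\infty\subset\varphi\,\mathcal{C}_M(\Omega)$; you do \emph{not} know that every flat $f$ equals $\varphi g$ with $g$ itself flat. That statement is precisely $(C')$, not $(B')$. So the map between flat-function spaces is not known to be surjective, and the open mapping theorem does not apply as you set it up.

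\emph{Test functions.} You propose cutoffs $\chi_{r,\tau}(\cdot-z)$ centered at the nearest point $z\in X$. Such a function equals $1$ near $z$, hence is certainly not flat on $X$, so it does not belong to the subspace on which you want to invert. Moreover, if all derivatives of your test function vanished at $x$, the Leibniz formula would give $D^J(f_x/\varphi)(x)=0$ as well, yielding nothing. The correct construction (the paper's) is the opposite: center the cutoff at $x$, with radius $r<\dist(x,X)$, so that its support misses $X$ entirely. Then $g_x$ is trivially in $\imax_{X,M}^\infty$, it equals $1$ near $x$, and the preimage $f_x$ produced by the open mapping theorem coincides with $1/\varphi$ near $x$; one simply reads off $D^J(1/\varphi)(x)=D^Jf_x(x)$ and bounds it by $\Vert f_x\Vert_{\mu_\nu}$.

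The clean fix is to reorder your cycle to match the paper's: prove $(B')\Rightarrow(C')$ first, which is a two-line Taylor-series argument (if $f=\varphi g$ with $f$ flat and $a\in X\setminus X_\infty$, then $T_af=(T_a\varphi)(T_ag)=0$ forces $T_ag=0$; density extends flatness to $\partial X$; redefine $g$ to be $0$ on $X$). Then run the open mapping argument for $(C')\Rightarrow(A')$, where surjectivity of $f\mapsto\varphi f$ on $\mathcal{D}_M(\Omega)\cap\imax_{X,M}^\infty$ is exactly the hypothesis, and the test functions are the $x$-centered cutoffs just described. The density assumption in $(B')$ is used in the short algebraic step, not in the functional-analytic one.
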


\begin{proof} We prove the implication $(C')\Rightarrow (A')$ first. We use the (DFS)-space $ \mathcal{D}_M(\Omega)= \varinjlim \mathcal{D}_\nu $ defined in Section \ref{classes}. The intersection $ \mathcal{D}_M(\Omega)\cap \imax_{X,M}^\infty$ is obviously closed in $ \mathcal{D}_M(\Omega)$, hence it is also a (DFS)-space with step spaces $ \mathcal{E}_\nu = \mathcal{D}_\nu\cap  \imax_{X,M}^\infty$. 

It is easy to see that the map $ \Lambda : \mathcal{D}_M(\Omega)\cap\imax_{X,M}^\infty \rightarrow \mathcal{D}_M(\Omega)\cap\imax_{X,M}^\infty $ defined by $ \Lambda (f)=\varphi f $ is continuous. Moreover, given an element $ g$ of $ \mathcal{D}_M(\Omega)\cap\imax_{X,M}^\infty  $, the assumption implies that it can be written $ \varphi h $ for some $ h\in \imax_{X,M}^\infty $. If $ \chi$ is an element of $ \mathcal{D}_M(\Omega) $ such that $ \chi=1 $ on $\supp g $, we then have $ g=\chi g=\varphi f $ with $ f=\chi h \in \mathcal{D}_M(\Omega)\cap\imax_{X,M}^\infty $. Thus, $ \Lambda $ is also surjective.  

We can therefore apply the De Wilde open mapping theorem ({\cite[Chapter 24]{MV}}), which yields the following property: for any $ \nu \geq 1 $, there exist an integer $ \mu_\nu\geq 1 $ and a real constant $ C_\nu>0 $ such that, for any $ g\in \mathcal{E}_\nu $, one can find an element $ f$ of $ \mathcal{E}_{\mu_\nu}$ such that 
\begin{equation}\label{opmap}
\varphi f=g\ \ \textrm{and}\ \ \Vert f\Vert_{\mu_\nu}\leq C_\nu \Vert g\Vert_\nu.  
\end{equation}
 
Now, let $x $ be a point in $ K\setminus X$, let $ d_K $ be a real number such that
$0<d_K< \dist(K,\mathbb{R}^n\setminus \Omega)$, and put $r_x= \min(\dist(x,X),d_K)$. Given $\lambda>0 $, we apply Lemma \ref{cutoff} with $r=2r_x/3$ and $\sigma=3\lambda/2 $. We set $ g_x(y)=\chi_{r,\sigma}(y-x) $. Then $ g_x$ belongs to $ \mathcal{C}_M(\Omega) $ and is compactly supported in the ball $B_x=B(x,2r_x/3)$. Obviously
$ B_x $ is contained in $ K'=\{y \in \Omega : \dist(y,K)\leq 2d_K/3\} $, which is a compact subset of $\Omega$. For a sufficiently large integer $ \nu$, depending only on $K $ and $ \lambda$, we have $ \nu\geq c\sigma $ and $ K'\subset K_\nu$, so that $ g_x$ belongs to $\mathcal{E}_\nu $ and
\begin{equation}\label{eq1}
 \Vert g_x\Vert_\nu= \Vert g_x\Vert_{\overline{B_x},\nu}
 \leq \Vert g_x\Vert_{\overline{B_x},c\sigma}
\leq \left(h_M(\lambda r_x)\right)^{-1}.
\end{equation}
Since $ h_M(\lambda r_x)$ equals either $ h_M(\lambda\dist(x,X)) $ or $ h_M(\lambda d_K)$, and since we have $ h_M(t)\leq 1 $ for every $ t>0$, we see that 
\begin{equation}\label{eq2}
h_M(\lambda r_x)\geq h_M(\lambda d_K) h_M(\lambda \dist(x,X)). 
\end{equation}
Now, if $ f_x $ denotes the element of $ \mathcal{E}_{\mu_\nu} $ associated with $ g_x$ by property \eqref{opmap}, we therefore have $ \varphi f_x=g_x $ and, thanks to \eqref{eq1} and \eqref{eq2},  
\begin{equation}\label{fin1}
\Vert f_x\Vert_{\mu_\nu}\leq C'_\nu \left(h_M(\lambda \dist(x,X))\right)^{-1}
\end{equation}
with $C'_\nu=C_\nu/h_M(\lambda d_K)$. 
For any $y $ in $ B'_x=B(x,r_x/3) $, we have $ g_x(y)=1 $, hence
\begin{equation}\label{fin2}
 f_x(y)= 1/\varphi(y).
\end{equation}
In particular, we have $ f_x(y)\neq 0 $. Thus, we derive $ B'_x\subset \supp f_x\subset K_{\mu_\nu} $, which implies, for any $ y\in B'_x $ and any multi-index $J$, 
\begin{equation}\label{fin3}
\vert D^J f_x(y)\vert \leq \Vert f_x\Vert_{\mu_\nu} (\mu_\nu)^j j!M_j.
\end{equation}
Combining \eqref{fin1}, \eqref{fin2} and \eqref{fin3}, we get the desired estimate \eqref{mainestim} with suitable constants $A=C'_\nu$ and $ B=\mu_\nu$ depending only on $ \nu$, hence only on $ K $ and $\lambda$.\\

We now prove the implication $(A')\Rightarrow (B')$. By Lemma \ref{boundary}, the assumption implies that $ X\setminus X_\infty $ is dense in $ \partial X $. The proof of the inclusion $ \imax_{X,M}^\infty\subset\mathcal{I}$ is a variant of the proof of {\cite[Theorem 2.3]{Th1}}; we give some details for the reader's convenience. Let $f$ be an element of $ \imax_{X,M}^\infty $. For any $ x\in \Omega\setminus X $ and any multi-index $P\in\mathbb{N}^n $, the Leibniz formula yields
\begin{equation}\label{sum}
D^P(f/\varphi)(x)=\sum_{I+J=P}\frac{P!}{I!J!}D^If(x)D^J(1/\varphi)(x).
\end{equation} 
Let $K$ be a compact subset of $ \Omega$. For $  x\in K\setminus X$, we combine the $ \mathcal{C}_M$ {\L}ojasiewicz condition with Lemma \ref{flatness} in order to obtain an estimate for all the terms $D^If(x)D^J(1/\varphi)(x)$ that appear in \eqref{sum}. 
Lemma \ref{flatness}, together with \eqref{hfunct2}, yields $ \vert D^I f(x)\vert \leq c_1 c_2^i i!M_i(h_M(c_3\dist(x,X)))^2$ with $ c_3=\rho c_2 $. Applying the $ \mathcal{C}_M$ {\L}ojasiewicz condition with $ \lambda=c_3 $, we therefore get $ \vert D^If(x)D^J(1/\varphi)(x)\vert\leq c_2Cc_2^i \sigma^j i!j!M_iM_jh_M(c_3\dist(x,X)) $. Since $ i+j=p$, we have $ i!j!\leq p! $, as well as $ M_iM_j\leq M_p $ by \eqref{logprod}. Inserting these estimates in \eqref{sum}, we obtain, for every multi-index $P$ and every $x\in K\setminus X $,
\begin{equation}\label{quotient}
\left\vert D^P(f/\varphi)(x)\right\vert\leq c_5c_6^p p!M_p h_M(c_3\dist(x,X))
\end{equation}
with $c_5=c_2C$ and $c_6=c_2+\sigma $. Using \eqref{quotient} and the Hestenes lemma, we see that the function $ g $ defined by $ g(x)=f(x)/\varphi(x) $ for $ x\in \Omega\setminus X $ and $g(x)=0 $ for $ x\in X$ belongs to $ \mathcal{C}_M(\Omega)$. Obviously, we have $ f=\varphi g $, hence $ f\in \mathcal{I}$.\\ 

Finally, we prove the implication $(B')\Rightarrow (C')$. Let $ f$ be an element of $ \imax_{X,M}^\infty $. By assumption, there is $ g\in \mathcal{C}_M(\Omega)$ such that $ f=\varphi g$. Let $ a $ be a point of $ X\setminus X_\infty $. In the ring of formal power series, we have $ 0=T_a f=(T_a\varphi)(T_ag) $ with $T_a\varphi\neq 0$, which implies $ T_ag=0 $. Thus, $ g $ is flat on $ X\setminus X_\infty $, hence on $ \partial X $ since it is assumed that $ X\setminus X_\infty $ is dense in $ \partial X$. Put $ \tilde{g}(x)=g(x) $ for $ x\in \Omega\setminus X $ and $ \tilde{g}(x)=0 $ for $ x\in X $. By the Hestenes lemma, it is then readily seen that $ \tilde{g} \in \imax^\infty_{X,M}$. Moreover, we have $ f=\varphi\tilde{g} $, hence $ f \in \mathcal{I}\imax_{X,M}^\infty $, and the proof is complete. 
\end{proof}

\begin{rem} We do not know whether the implication $(B')\Rightarrow (C')$ still holds without the additional assumption on $ X\setminus X_\infty$ in $(B')$. This is true when $X$ is a real-analytic submanifold of $\Omega$: indeed, according to {\cite[Theorem 4.2.4]{Th4}}, we then have\footnote{The result in \cite{Th4} is actually a local version of that statement, but it can be globalized, using partitions of unity.} $\imax_{X,M}^\infty=\imax_{X,M}^\infty\imax_{X,M}^\infty $. Thus, in this case, the inclusion $ \imax_{X,M}^\infty\subset \mathcal{I} $ easily implies $(C')$. 
\end{rem}

\begin{rem}\label{indepgen} 
Using the equivalence $(A')\Leftrightarrow (C')$, we see that if $ \varphi$ satisfies the $\mathcal{C}_M$ {\L}ojasiewicz condition and if $ h $ is an invertible element of the algebra $ \mathcal{C}_M(\Omega)$, so that $\varphi$ and $h\varphi$ generate the same ideal $\mathcal{I}$, then $ h\varphi$ also satisfies the $\mathcal{C}_M$ {\L}ojasiewicz condition. This can also be checked by a direct computation with the Leibniz formula.  
\end{rem}

\section{Additional properties and examples}\label{additional}

\subsection{On the zero set} 

We have a Denjoy-Carleman counterpart of {\cite[Proposition V.4.6]{Tou}}.

\begin{prop} Let $\varphi$ be an element of $\mathcal{C}_M(\Omega)$ that satisfies the $\mathcal{C}_M$ {\L}ojasiewicz condition, and let $X$ be its zero set. Then there is a $\mathcal{C}_M$-smooth submanifold $Y$ of $\Omega$ such that $X=\overline{Y}$.
\end{prop}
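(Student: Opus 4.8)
The plan is to mimic the classical argument for \cite[Proposition V.4.6]{Tou}, where the submanifold $Y$ is built as the set of regular points of $X$, taking advantage of the fact that $\mathcal{C}_M$ inherits enough analytic-geometric tools (implicit function theorem, composition, the flatness estimates of Lemma \ref{flatness}) to carry the construction out. The key point is that condition $(B')$ of Theorem \ref{main}, which holds by hypothesis, forces $X\setminus X_\infty$ to be dense in $\partial X$, and combined with the observation (already used in the proof of Lemma \ref{boundary}) that $\varphi$ is flat at every interior point of $X$, this gives us a ``large'' set $X\setminus X_\infty$ of points where $\varphi$ is \emph{not} flat, all lying on $\partial X$.

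First I would fix a point $a\in X\setminus X_\infty$. Since $T_a\varphi\neq 0$, some derivative $D^J\varphi(a)\neq 0$; choosing $J$ of minimal length $k$ and then a first-order directional derivative among the $D^J\varphi$ with $|J|=k$ that is nonzero at $a$ after differentiating once more (or simply invoking the classical lemma that a non-flat $\mathcal{C}_M$ function has, near $a$, points where the gradient of a suitable derivative is nonzero), one produces, via the $\mathcal{C}_M$ implicit function theorem \cite{Kom2}, a $\mathcal{C}_M$-smooth hypersurface-type piece of the zero set through points near $a$. More precisely, following Tougeron, I would let $Y$ be the set of points $x\in X$ near which $X$ coincides with a $\mathcal{C}_M$-submanifold — equivalently the set of $x\in X$ at which $\varphi$ is non-flat and around which the classical local description of $X$ applies. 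Then $Y$ is by construction a $\mathcal{C}_M$-submanifold (an open subset of $X$ in the subspace topology, locally a graph), and $Y\subset X$, so $\overline{Y}\subset X$ since $X$ is closed.

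The reverse inclusion $X\subset\overline{Y}$ is the heart of the matter, and it is where the hypothesis is genuinely used. Interior points of $X$ lie in $Y$ trivially (locally $X=\Omega$ there, an $n$-dimensional submanifold). For a boundary point $a\in\partial X$, density of $X\setminus X_\infty$ in $\partial X$ gives points $b\in X\setminus X_\infty$ arbitrarily close to $a$; at each such $b$, $\varphi$ is non-flat, and one must check that $X\setminus X_\infty$ — or at least a dense-in-$\partial X$ subset of it — is actually contained in $\overline{Y}$. This is the step I expect to be the main obstacle: near a non-flat point $b$, $X$ need not itself be a submanifold (the zero set of a function with nonvanishing some-order derivative can still be singular), so one has to argue, as in the $\mathcal{C}^\infty$ case, that the \emph{regular part} of $X$ near $b$ is nonempty and accumulates at $b$ — the standard trick being to stratify using the first non-flat derivative and an induction on its order, reducing to the case where $\varphi$ or one of its derivatives is a submersion on a dense relatively open subset. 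Carrying this stratification through in $\mathcal{C}_M$ requires only that each elementary step (differentiation, restriction, implicit function theorem, finite intersection) stays inside the class, which it does by Section \ref{basicprop}. Assembling: $\overline{Y}\supset (\text{interior of }X)\cup\overline{X\setminus X_\infty}\supset \mathrm{int}\,X\cup\partial X=X$, giving $X=\overline{Y}$.
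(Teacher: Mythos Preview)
Your overall strategy matches the paper's: both of you define $Y$ as the regular part of $X$ and reduce the equality $X=\overline{Y}$ to Tougeron's classical induction from \cite[V.4.6]{Tou}, noting that the $\mathcal{C}_M$ implicit function theorem and stability under composition make the local constructions go through in the class. So the route is essentially the same.

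There is, however, one point the paper makes explicit that your sketch skips, and it is exactly the place where the $\mathcal{C}_M$ argument needs care. Tougeron's induction does not merely use smooth operations (differentiation, restriction, implicit function theorem): at each step it \emph{reapplies} the boundary-density lemma (the analogue of Lemma \ref{boundary}) to an auxiliary function obtained from $\varphi$ by restriction to a lower-dimensional $\mathcal{C}_M$-submanifold. For that reapplication one must know that the auxiliary function still satisfies a suitable {\L}ojasiewicz-type hypothesis. The full $\mathcal{C}_M$ {\L}ojasiewicz condition of Definition \ref{Lcond}, with control on \emph{all} derivatives of $1/\varphi$, has no obvious reason to be inherited under such restriction. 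What the paper observes is that the \emph{proof} of Lemma \ref{boundary} uses only the zero-order lower bound
\[
\vert\varphi(x)\vert \geq C\, h_M\big(\lambda\,\dist(x,X)\big),
\]
and this scalar inequality \emph{is} preserved through the inductive steps. That remark is what lets Tougeron's induction run in the $\mathcal{C}_M$ setting ``up to minor modifications.'' Your sentence ``carrying this stratification through in $\mathcal{C}_M$ requires only that each elementary step \ldots\ stays inside the class'' addresses the regularity of the constructions but not the persistence of the inductive hypothesis; adding the zero-order observation closes that gap.
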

 
\begin{proof} We notice first that the conclusion of Lemma \ref{boundary} only requires a weaker property than the $\mathcal{C}_M$ {\L}ojasiewicz condition: more precisely, the proof remains valid as soon as, for any compact subset $K$ of $\Omega$ and any real $\lambda>0$, one can find a constant $C>0$ such that the inequality
$ \vert \varphi(x)\vert\geq C h_M(\lambda \dist(x,X))$ holds for any $x\in K$. It is then fairly easy to check that the proof by induction given in \cite{Tou} for the usual {\L}ojasiewicz inequality on $\mathcal{C}^\infty$ functions remains valid in the $\mathcal{C}_M$ case, up to minor modifications. 
\end{proof}

\subsection{Connection with closedness} In this section, we show that the $ \mathcal{C}_M$ {\L}ojasiewicz condition behaves as expected with respect to closedness properties of ideals. 

\begin{prop}\label{connectclosed} Let $ \varphi $ be a non-zero element of $ \mathcal{C}_M(\Omega) $ that generates a closed ideal in $ \mathcal{C}_M(\Omega)$. Then $ \varphi$ satisfies the $ \mathcal{C}_M$ {\L}ojasiewicz condition. Moreover, both properties are equivalent when the zeros of $\varphi$ are isolated.  
\end{prop}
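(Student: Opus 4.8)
The plan is to establish the two assertions in turn, the first one unconditionally and the second one under the isolated-zeros hypothesis.

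\emph{Closedness implies the $\mathcal{C}_M$ {\L}ojasiewicz condition.} The idea is to rerun the functional-analytic argument of the implication $(C')\Rightarrow(A')$ in the proof of Theorem~\ref{main}, replacing $\mathcal{D}_M(\Omega)\cap\imax_{X,M}^\infty$ by $\mathcal{D}_M(\Omega)$ and deducing the required surjectivity from closedness instead of from property $(C')$. Concretely, I would consider the continuous multiplication operator $\Lambda\colon\mathcal{D}_M(\Omega)\to\mathcal{D}_M(\Omega)$, $\Lambda(f)=\varphi f$. Its range is $\varphi\mathcal{D}_M(\Omega)$, which equals $\mathcal{I}\cap\mathcal{D}_M(\Omega)$: the inclusion $\subset$ is clear, and conversely, if $\varphi h$ has compact support with $h\in\mathcal{C}_M(\Omega)$, then multiplying $h$ by an element of $\mathcal{D}_M(\Omega)$ equal to $1$ on $\supp(\varphi h)$ puts $\varphi h$ into $\varphi\mathcal{D}_M(\Omega)$. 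Since the inclusion $\mathcal{D}_M(\Omega)\hookrightarrow\mathcal{C}_M(\Omega)$ is continuous and $\mathcal{I}$ is closed, the range of $\Lambda$ is a closed subspace of the (DFS)-space $\mathcal{D}_M(\Omega)$, hence itself a (DFS)-space; the De Wilde open mapping theorem ({\cite[Chapter 24]{MV}}) then yields, exactly as in the proof of Theorem~\ref{main}, integers $\mu_\nu$ and constants $C_\nu$ such that every $g$ in the range of $\Lambda$ with $\supp g\subset K_\nu$ and $\Vert g\Vert_\nu<\infty$ can be written $g=\varphi f$ with $\Vert f\Vert_{\mu_\nu}\leq C_\nu\Vert g\Vert_\nu$. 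I would then apply this to the very cutoff functions $g_x(y)=\chi_{r,\sigma}(y-x)$ of the proof of Theorem~\ref{main}, with $r=2r_x/3$, $r_x=\min(\dist(x,X),d_K)$ and $\sigma=3\lambda/2$: the support of $g_x$ is a closed ball disjoint from $X$, on which $1/\varphi$ is of class $\mathcal{C}_M$ by Lemma~\ref{invers}, and since $g_x$ is flat on the boundary of its support, $g_x/\varphi$ extends by $0$ to a function of $\mathcal{D}_M(\Omega)$, so $g_x$ lies in the range of $\Lambda$. The remaining estimates are then word for word those of the proof of Theorem~\ref{main}, and they give \eqref{mainestim}.

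\emph{Under the isolated-zeros hypothesis, the $\mathcal{C}_M$ {\L}ojasiewicz condition implies closedness.} Here I would prove directly that $\mathcal{I}=\varphi\mathcal{C}_M(\Omega)$ is closed. Let $f$ belong to the closure of $\mathcal{I}$, say $f=\lim_n\varphi g_n$ with $g_n\in\mathcal{C}_M(\Omega)$. Since $X$ is discrete we have $\partial X=X$, so Lemma~\ref{boundary} gives $X_\infty=\emptyset$; in particular $T_a\varphi\neq0$ for every $a\in X$. Fix $a\in X$. Convergence in $\mathcal{C}_M(\Omega)$ forces convergence of all derivatives at $a$, hence $T_af=\lim_n T_a\varphi\cdot T_ag_n$ in the Krull topology of $\mathbb{C}[[x_1-a_1,\dots,x_n-a_n]]$. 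As a nonzero principal ideal of that Noetherian local ring, $(T_a\varphi)$ is closed in the Krull topology by Krull's intersection theorem, so $T_af=T_a\varphi\cdot S_a$ for a unique formal series $S_a$; moreover $S_a$ is itself the Taylor series at $a$ of a germ of $\mathcal{C}_M$, by the stability under exact division of the ring of $\mathcal{C}_M$-jets (a consequence of the $\mathcal{C}_M$ Weierstrass division theorem available for strongly regular $M$). Since $X$ is discrete, the Whitney extension theorem in $\mathcal{C}_M(\Omega)$ provides a $\psi\in\mathcal{C}_M(\Omega)$ with $T_a\psi=S_a$ for every $a\in X$ (the compatibility conditions are automatically satisfied, since only finitely many points of $X$ lie in any given compact set). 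Then $f-\varphi\psi$ is flat at every point of $X$, i.e.\ $f-\varphi\psi\in\imax_{X,M}^\infty$; and by the implication $(A')\Rightarrow(B')$ of Theorem~\ref{main}, $\imax_{X,M}^\infty\subset\mathcal{I}$, so $f-\varphi\psi\in\mathcal{I}$ and finally $f=\varphi\psi+(f-\varphi\psi)\in\mathcal{I}$.

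The step I expect to be the main obstacle is the formal-division input in the second part: checking that the quotient $S_a$ is again a $\mathcal{C}_M$-jet, and that the collection $(S_a)_{a\in X}$ is realized by a single function of $\mathcal{C}_M(\Omega)$. Everything else is either a straightforward adaptation of the proof of Theorem~\ref{main} or a routine use of the Krull intersection theorem and of the Whitney extension theorem in $\mathcal{C}_M(\Omega)$.
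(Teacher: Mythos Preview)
Your first part matches the paper's proof exactly: the paper also views multiplication by $\varphi$ as a continuous surjection of $\mathcal{D}_M(\Omega)$ onto its closed subspace $\varphi\mathcal{D}_M(\Omega)=\mathcal{I}\cap\mathcal{D}_M(\Omega)$ and reruns the open-mapping argument of $(C')\Rightarrow(A')$.

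For the second part, the paper takes a shorter route. Rather than dividing formally in $\mathbb{C}[[x]]$ via the Krull argument and then arguing separately that the quotient lies in the $\mathcal{C}_M$-jet ring, it invokes the $\mathcal{C}_M$ Whitney spectral theorem \cite{CC2} directly: for $f\in\overline{\mathcal{I}}$ and each $a_j\in X$, this yields outright a function $g_j\in\mathcal{C}_M(\Omega)$ with $f-\varphi g_j$ flat at $a_j$. The $g_j$'s are then glued by pairwise disjoint bump functions $\chi_j\in\mathcal{D}_M(\Omega)$ with $\chi_j=1$ near $a_j$ into the locally finite sum $g=\sum_j\chi_jg_j$, so that $f-\varphi g\in\imax_{X,M}^\infty$, and one concludes via $(B')$ as you do. This sidesteps both obstacles you flagged: the spectral theorem already packages the exact division in the jet ring together with the pointwise Borel extension, and the disjoint bump-function patching replaces the simultaneous Whitney extension.

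Your route is not wrong in outline, but the justification you give for the key step---``the $\mathcal{C}_M$ Weierstrass division theorem available for strongly regular $M$''---is not on firm ground: Weierstrass division in non-quasianalytic Denjoy-Carleman classes is a delicate matter and is not among the tools the paper or its references establish. What \cite{CC2} does provide is precisely the spectral theorem (resting on Noetherianity of the jet ring), and invoking that instead would close the gap and essentially collapse your argument into the paper's.
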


\begin{proof} We use the same notation as in the proof of the implication $(C')\Rightarrow (A')$ of Theorem \ref{main}. Put $ \mathcal{I}=\varphi \mathcal{C}_M(\Omega) $ and assume that $\mathcal{I}$ is closed in $ \mathcal{C}_M(\Omega)$. Since the inclusion $ \mathcal{D}_M(\Omega)\hookrightarrow \mathcal{C}_M(\Omega)$ is continuous, $ \mathcal{I}\cap \mathcal{D}_M(\Omega)$ is closed in $ \mathcal{D}_M(\Omega)$. Using cutoff functions, it is also easy to see that $ \mathcal{I}\cap \mathcal{D}_M(\Omega)=\varphi \mathcal{D}_M(\Omega)$. It is then possible to duplicate the proof of the implication $(C')\Rightarrow (A')$, the only difference being that the map $ f\mapsto \varphi f $ is now considered as a map from the (DFS)-space $ \mathcal{D}_M(\Omega)$ onto its closed subspace $ \varphi \mathcal{D}_M(\Omega)$. 

The converse in the case of isolated zeros is based on a variant of the argument leading to {\cite[Proposition 4.1]{Th1}} (which deals with a singleton). Assume that $\varphi$ satisfies the $\mathcal{C}_M$ {\L}ojasiewicz condition and that its zero set $X$ consists of isolated points, so that $X$ is a countable subset $ \{a_j :j\geq 1\}$ of $\Omega$. Put $\mathcal{I}=\varphi \mathcal{C}_M(\Omega)$ and let $f$ be an element of the closure $\overline{\mathcal{I}}$. By the $\mathcal{C}_M$ version of Whitney's spectral theorem \cite{CC2}, for every $j\geq 1$ there is a function $g_j$ of $ \mathcal{C}_M(\Omega)$ such that $ f-\varphi g_j $ is flat at $a_j$. Let $ (\chi_j)_{j\geq 1} $ be a sequence of compactly supported elements of $ \mathcal{C}_M(\Omega)$  such that $ \chi_j=1 $ in a neigborhood of $a_j$ and $ \supp{\chi_j}\cap \supp{\chi_k}=\emptyset $ for $ k\neq j$. Then the (locally finite) series $g= \sum_{j\geq 1}\chi_j g_j $ defines an element of $\mathcal{C}_M(\Omega)$ and we have $ f-\varphi g \in \imax_{X,M}^\infty $. Since $(B')$ holds, this yields $f\in \mathcal{I}$, hence the result.
\end{proof}

\begin{exam}\label{appli}
According to Proposition \ref{connectclosed} and the results in \cite{Th1, Th3}, examples of functions $ \varphi$ which satisfy the $ \mathcal{C}_M$ {\L}ojasiewicz condition will include any homogeneous polynomial with an isolated real critical point at $0$, as well as real analytic functions whose germs of complex zeros intersect $\mathbb{R}^n$ at isolated points with {\L}ojasiewicz exponent $1$ for the regular separation property. 
On the other hand, some analytic functions do not satisfy the $ \mathcal{C}_M$ {\L}ojasiewicz condition: for instance, given an integer $k\geq 2$, the polynomial $ \psi(x)=x_1^2+x_2^{2k} $ does not satisfy the $\mathcal{C}_M $ {\L}ojasiewicz condition in $\mathbb{R}^2$, as can be seen from the results in \cite{Th1} (property $(B')$ fails).  
\end{exam}

We now give an example showing that the converse to Proposition \ref{connectclosed} is false without the assumption of isolated zeros. In particular, the $ \mathcal{C}_M$ {\L}ojasiewicz condition does not imply closedness in general. 

\begin{exam} We put $n=2$, $\Omega=\mathbb{R}^2$, and $\varphi(x)= x_1 \psi(x) $ where $ \psi$ is the polynomial mentioned in Example \ref{appli}. We then have $ X=\{x\in \mathbb{R}^2 : x_1=0 \} $ and $\dist(x,X)=\vert x_1\vert$. Let $x$ be a point in $ \mathbb{R}^2\setminus X $. For any $ v=(v_1,v_2)\in \mathbb{C}^2 $, we have
\begin{equation*}
\vert \psi(x+v)-\psi(x)\vert\leq 2 \vert x_1\vert\vert v_1\vert +\vert v_1\vert^2 + \sum_{p=1}^{2k}\binom{2k}{p}\vert x_2\vert^{2k-p}\vert v_2\vert^p.
\end{equation*}
We also have the obvious inequalities $ \vert x_1\vert\leq (\psi(x))^{1/2} $ and $ \vert x_2\vert\leq (\psi(x))^{1/2k}$. Thus, if we assume $\vert v_1\vert \leq \delta (\psi(x))^{1/2} $ and $ \vert v_2\vert\leq \delta (\psi(x))^{1/2k} $ for some real number $ \delta $ with $0<\delta<1$, we get
\begin{equation*}
\vert \psi(x+v)-\psi(x)\vert\leq \left(2\delta + \delta^2+ \sum_{p=1}^{2k}\binom{2k}{p}\delta^p\right)\psi(x) \leq (2^{2k}+2)\delta \psi(x).
\end{equation*}
Setting $ \delta = (2^{2k+1}+4)^{-1} $, we obtain $\vert \psi(\zeta)\vert\geq \frac{1}{2}\psi(x) $ for every point $ \zeta $ in the bidisc $ \{ \zeta\in\mathbb{C}^2 : \vert \zeta_1-x_1\vert \leq \delta (\psi(x))^{1/2},\ \vert \zeta_2-x_2\vert\leq \delta (\psi(x))^{1/2k}\} $. The Cauchy formula then yields, for every $(i,j)\in\mathbb{N}^2 $,
\begin{equation*}
\left\vert \frac{\partial^{i+j}}{\partial x_1^i x_2^j}\left( \frac{1}{\psi(x)}\right)\right\vert\leq 2\delta^{-(i+j)}i!j! (\psi(x))^{-(\frac{i}{2}+\frac{j}{2k}+1)},
\end{equation*}
which easily implies
\begin{equation}\label{psi}
\left\vert \frac{\partial^{i+j}}{\partial x_1^i x_2^j} \left(\frac{1}{\psi(x)}\right)\right\vert\leq 2\delta^{-(i+j)}i!j! \vert x_1\vert^{-(i+j+2)}
\end{equation}
provided we assume $\vert x_1\vert < 1$. Using \eqref{psi}, the definition of $ \varphi $, and the Leibniz formula, we then get
\begin{equation*}
\left\vert \frac{\partial^{i+j}}{\partial x_1^i x_2^j} \left(\frac{1}{\varphi(x)}\right)\right\vert \leq B^{i+j+1} i!j! \vert x_1\vert^{-(i+j+2)}
\end{equation*}
for some suitable constant $ B>0 $. Let $ \lambda $ be a given positive real number. We write $ \vert x_1\vert^{-(i+j+2)}=\frac{\lambda^{i+j+2} M_{i+j+2}}{(\lambda \vert x_1\vert)^{i+j+2}M_{i+j+2}} $. The definition of $h_M$ implies $ (\lambda \vert x_1\vert)^{i+j+2}M_{i+j+2} \geq h_M(\lambda \vert x_1\vert) = h_M(\lambda\dist(x,X)) $, whereas \eqref{modg} yields $ M_{i+j+2}\leq A^{i+j+2}M_2 M_{i+j} $. Gathering these inequalities, we eventually obtain 
$\vert x_1\vert^{-(i+j+2)}\leq (A\lambda)^{i+j+2} (h_M(\lambda \dist(x,X)))^{-1} $ and
\begin{equation*}
\left\vert \frac{\partial^{i+j}}{\partial x_1^i x_2^j} \left(\frac{1}{\varphi(x)}\right)\right\vert \leq \frac{C\sigma^{i+j} (i+j)! M_{i+j}}{h_M(\lambda \dist(x,X))} 
\end{equation*}
with $ C= A^2B\lambda^2 $ and $\sigma= AB\lambda $. Thus, we have established the desired estimate for $ \vert x_1\vert=\dist(x,X)<1 $, which suffices to conclude that $\varphi $ satisfies the $\mathcal{C}_M$ {\L}ojasiewicz condition (see Remark \ref{neighb}). 
However, the ideal $ \mathcal{I}= \varphi \mathcal{C}_M(\mathbb{R}^2) $ is not closed for $ k\geq 2 $. Indeed, in this case, it has been shown in \cite{Th1} that the ideal $ \mathcal{J}=\psi \mathcal{C}_M(\mathbb{R}^2) $ is not closed. Since $\mathcal{J}$ is the preimage of $\mathcal{I} $ under the continuous mapping $ \Pi : \mathcal{C}_M(\mathbb{R}^2)\to \mathcal{C}_M(\mathbb{R}^2) $ defined by 
$ \Pi(f)(x)=x_1 f(x) $, we see that $ \mathcal{I} $ is not closed either. 
\end{exam}

We conclude with a natural question. 

\begin{prob} Is it possible to extend the above results to the general case of finitely generated ideals? A first idea is to mimic the definition of {\L}ojasiewicz ideals in the $\mathcal{C}^\infty$ case, and say that a finitely generated ideal of $\mathcal{C}_M(\Omega)$ is {\L}ojasiewicz if it contains an element $\varphi$ which satisfies the $\mathcal{C}_M$ {\L}ojasiewicz condition. However, this definition doesn't seem to allow an immediate extension of the crucial implication $(C')\Rightarrow (A')$, whose proof is quite different from the $\mathcal{C}^\infty$ case and doesn't seem easily adaptable to the case of several generators.
\end{prob}


\begin{thebibliography}{15}


\normalsize
\baselineskip=17pt


\bibitem{BBMT} J. Bonet; R. Braun; R. Meise; B.A. Taylor, \emph{Whitney's extension theorem for nonquasianalytic classes of ultradifferentiable functions}, Studia Math. {99} (1991), 155--184.

\bibitem{B} J. Bruna, \emph{An extension theorem of Whitney type for non quasianalytic classes of functions}, J. London Math. Soc. {22} (1980), 495--505.

\bibitem{CC} J. Chaumat; A.-M. Chollet, \emph{Surjectivit\'e de l'application restriction \`a un compact dans des classes de fonctions ultradiff\'erentiables}, Math. Ann. {298} (1994), 7--40.

\bibitem{CC2} J. Chaumat; A.-M. Chollet, \emph{Caract\'erisation des anneaux noetheriens de s\'eries formelles \`a croissance contr\^ol\'ee. Application \`a la synth\`ese spectrale}, Publ. Math. {41} (1997), 545--561.

\bibitem{Kom} H. Komatsu, \emph{Ultradistributions, I. Structure theorems and a characterization}, J. Fac. Sci. Univ. Tokyo, Sect. IA {20} (1973), 25--105.

\bibitem{Kom2} H. Komatsu, \emph{The implicit function theorem for ultradifferentiable mappings}, Proc. Japan Acad. ser. A {55} (1979), 69--72.

\bibitem{MV} R.Meise; D. Vogt, \emph{Introduction to functional analysis}, The Clarendon Press, Oxford University Press, New York, 1997.

\bibitem{Ris} J.-J. Risler, \emph{Le th\'eor\`eme des z\'eros pour les id\'eaux de fonctions diff\'erentiables en dimension $2$ et $3$}, Ann. Inst. Fourier {26} (1976), 73--107.

\bibitem{Rou} C. Roumieu, \emph{Ultradistributions d\'efinies sur $\mathbb{R}^n$ et sur certaines classes de vari\'et\'es diff\'erentiables}, J. Analyse Math. {10} (1962-1963), 153--192. 

\bibitem{Th1} V. Thilliez, \emph{On closed ideals in smooth classes}, Math. Nachr. {227} (2001), 143--157.

\bibitem{Th2} V. Thilliez, \emph{A sharp division estimate for ultradifferentiable germs}, Pacific J. Math. {205} (2002), 237--256.  

\bibitem{Th3} V. Thilliez, \emph{Bounds for quotients in rings of formal power series with growth constraints}, Studia Math. {151} (2002), 49--65

\bibitem{Th4} V. Thilliez, \emph{Division by flat ultradifferentiable functions and sectorial extensions}, Results Math. {44} (2003), 169--188

\bibitem{Tho} R. Thom, \emph{On some ideals of differentiable functions}, J. Math. Soc. Japan {19} (1967), 255--259

\bibitem{Tou} J.-C. Tougeron, \emph{Id\'eaux de fonctions diff\'erentiables}, Springer Verlag, Berlin, 1972. 

\end{thebibliography}
\end{document}